
\documentclass{amsart}

\usepackage{amssymb,amsmath,amsthm}
\usepackage{enumerate}
\usepackage{ifpdf}
\usepackage{url,hyperref}

\ifpdf
\usepackage[all,pdf]{xy} %% NB this MUST be loaded last.
\else
\input xy
\xyoption{all}
\xyoption{2cell}
\xyoption{v2}
\fi

\newcommand{\Cat}{\textbf{\textrm{Cat}}}
\newcommand{\Gpd}{\textbf{\textrm{Gpd}}}
\newcommand{\Bicat}{\textbf{\textrm{Bicat}}}
\newcommand{\Sch}{\textbf{\textrm{Sch}}}

\newcommand{\id}{\mathrm{id}}
\newcommand{\into}{\hookrightarrow}
\newcommand{\equals}{\hspace{10pt}=\hspace{10pt}}

\newtheorem{theorem}{Theorem}[section]
\newtheorem{lemma}[theorem]{Lemma}
\newtheorem{proposition}[theorem]{Proposition}
\newtheorem{corollary}[theorem]{Corollary}
\theoremstyle{definition}
\newtheorem{definition}[theorem]{Definition}
\newtheorem{example}[theorem]{Example}
\newtheorem{remark}[theorem]{Remark}

\numberwithin{equation}{section}

\begin{document}

\title[2-categories admitting bicategories of fractions]%
{On certain 2-categories admitting localisation by bicategories of fractions}

\date{}

\author[D.~M.~Roberts]{David Michael Roberts}
\address{School of Mathematical Sciences, University of Adelaide, SA 5005,
Australia}
\email{david.roberts@adelaide.edu.au}
\thanks{DMR was supported financially by ARC grant number DP120100106, and emotionally by Mrs~R. 
This document is released under a CC0 license \url{http://creativecommons.org/publicdomain/zero/1.0/}}

\begin{abstract}

	Pronk's theorem on bicategories of fractions is applied, in almost all cases in the literature, to 2-categories of geometrically presentable stacks on a 1-site. 
	We give an proof that subsumes all previous such results and which is purely 2-categorical in nature, ignoring the nature of the objects involved.
	The proof holds for 2-categories that are not (2,1)-categories, and we give conditions for local essential smallness.

	%This is the published version of the preprint arXiv:1402.7108.

\end{abstract}

\subjclass[2000]{primary 18D05; secondary 18F10, 18E35}
\keywords{2-categories, bicategories of fractions, localization, stacks}

\maketitle

\section{Introduction}

The area of \emph{higher geometry} deals broadly with generalisations of `spaces', be they topological, differential geometric, algebro-geometric etc.,
that can be represented by groupoids (or higher groupoids) in the original category of spaces. 
Usually these go by the label differential, topological, algebraic etc.~stacks, but when viewed as stacks there are more morphisms between objects than when viewed simply as internal groupoids; there are non-invertible maps of groupoids that become equivalences of the associated stacks. 
Pronk, in \cite{Pronk_96}, formulated what it meant to localise a bicategory at a class of morphisms and introduced a bicategory of fractions that exists under certain conditions in order to construct this localisation.
She then went on to show that 2-categories of differentiable, topological and algebraic stacks (of certain sorts) were indeed localisations of the 2-categories of groupoids internal to the appropriate categories.

Since then, many other cases of 2-categorical localisations have been considered, using Pronk's result applied to other categories (for extensive discussion and examples see \cite[\S\S2,8]{Roberts_12}). 
However, almost all of them---only two exceptions are known to the author---deal with internal groupoids and/or stacks in some setting. 
In this case, the 2-category in question, and the class of morphisms at which one wants to localise, satisfy some properties making available a much simpler calculus of fractions, namely \emph{anafunctors}. 
These were introduced by Makkai \cite{Makkai} for the category of sets \emph{sans} Choice and in the general internal setting by Bartels \cite{Bartels}. 
The author's \cite{Roberts_12} considered the case of a sub-2-category $C\into \Cat(S)$ of the 2-category of categories internal to a subcanonical site $(S,J)$, satisfying some mild closure conditions.
The main result of \cite{Roberts_12} is that such 2-categories admit a bicategory of fractions at the so-called \emph{weak equivalences} (also called \emph{Morita equivalences}), and that anafunctors also calculate this localisation.

This note serves to show that given a 2-category with the structure of a \emph{2-site} of a certain form (all covering maps must be representably fully faithful), the same result holds -- namely that the bicategory of fractions of Pronk exists.
One can then approach the theory of presentable stacks (on 1-sites) in a formal way, analogous to Street's formal theory of stacks \cite{Street_82} (cf Shulman's \cite{Shulman_12}). 
This result covers all others in the literature dealing with localising 2-categories of internal categories or groupoids. 
It may also replicate the result in \cite{Pronk-Warren_14}, although the framework therein is conceptually more pleasing; the theorems of this note are definitely sufficient to imply the applications of the abstract framework of \cite{Pronk-Warren_14} 

Both \cite{Pronk-Warren_14}, and the recent paper \cite{Abbad-Vitale_13} (written in parallel with the present note), deal with constructing localisations via fibrancy/projectivity. 
Hom-categories in the constructions of localisations in both papers are in fact hom-categories of the original bicategory, and so one is assured of local smallness, a problem when localising any large (bi-)category, using local smallness of the original bicategory.
The present note does not assume existence of enough fibrant objects or projectives to prove local (essential) smallness (\ref{prop:loc_ess_small}).
It certainly assumes less than the applications in \cite{Pronk-Warren_14} (prestacks on a subcanonical site) or \cite{Abbad-Vitale_13} (internal groupoids in a regular category).

Sometimes when calculating the localisation of a 2-category of internal groupoids, various authors use what are variously known as \emph{Hilsum-Skandalis morphisms} or \emph{right principal bibundles} (see \cite[\S 2]{Roberts_12} for discussion and references). 
In the more general setting of 2-sites as defined here such a definition is not possible, as one has hom-categories that are not groupoids.
Additionally, composition of 1-arrows in the bicategory of internal groupoids and bibundles requires existence of pullback-stable reflexive coequalisers, an assumption not made here. 
Also, the definition of a bibundle between internal \emph{categories} is not clear and the right notion of a map of bibundles (i.e.~2-arrows in the localisation) does not appear to be as simple as in the groupoid case.

The author thanks the organisers of the Australian Category Seminar for the opportunity to present an early version this work in October 2011.
Comments by the referee lead to a rethink of this paper and subsequent strengthening of the results.

\section{Preliminaries}

Though this paper touches lightly on the theory of bicategories, a knowledge of 2-categories is sufficient (an accessible reference is \cite{Leinster_98}).
We consider our 2-categories to have one extra piece of structure, namely an analogue of a Grothendieck pretopology.

\begin{definition}
	\label{def:strict_pretopology}

	A \emph{fully faithful singleton coverage} on a 2-category $K$ is a class $J$ of 1-arrows satisfying the following properties:

	\begin{enumerate}[(i)]

		\item
			$J$ contains the identity arrows and is closed under composition;

      \item
			for all $q\colon u\to x\in J$ and 1-arrows $f\colon y\to x$, there is a square
			\[
               \xymatrix{
					v \ar[d]_k \ar[r] & u\ar[d]^q_{\ }="s" \\
					y \ar[r]_{f}^{\ }="t" & x
					\ar@{=>}_{\simeq}"s";"t"
				}
			\]
			with $k\in J$;

      \item
			for any $q\colon u\to x$ in $J$ the functor $q_*\colon K(z,u)\to K(z,x)$ is fully faithful;
	\end{enumerate}
	Morphisms satisfying (iii) are called \emph{ff 1-arrows}. A \emph{2-site} will here denote a 2-category equipped with a fully faithful singleton coverage.

\end{definition}

For brevity this paper will use the terminology \emph{2-site} even though this has been used elsewhere for something more general. 
Note that that $K$ is not necessarily small, but in what follows may sometimes be locally essentially small. 
That is, the hom-categories $K(x,y)$ are equivalent to small categories for all objects $x$ and $y$.

One might think about the 1-arrows in $J$ as being something like acyclic fibrations in a category with fibrant objects, without the requirement for the existence of fibrant objects.

We define the analogue of weak equivalences in this setting.

\begin{definition}
	\label{def:weak_equiv}

	A 1-arrow $x\to y$ in $(K,J)$ is called \emph{$J$-locally split} if there is a map $u\to y$ in $J$ and a diagram of the form
	\[
		\xymatrix{
		& x\ar[d] \\
		u \ar[r]^(.7){\ }="t" \ar[ur]_(.7){\ }="s" & y
		\ar@{=>}"s";"t"
		}
	\]
	with the 2-arrow an isomorphism. 
	A \emph{weak equivalence} in $(K,J)$ is an ff 1-arrow that is $J$-locally split. 
	The class of weak equivalences will be denoted $W_J$.

\end{definition}

\begin{example}\label{eg:internal_cats}

	As an example, take the 2-category $K$ to be $\Cat(S)$ or $\Gpd(S)$ for $(S,T)$ a finitely complete site with singleton pretopology $T$.\footnote{Recall that a singleton pretopology $T$ is a class of arrows containing all identity arrows and closed under composition and pullbacks (which must exist for arrows in $T$).} 
	One can also take the 2-category of Lie groupoids, which is course is not finitely complete -- in this instance, $T$ can be taken as the pretopology of surjective submersions.
	In each instance the pretopology $J=J(T)$ is defined to be the class of fully faithful functors such that the object component is a cover in $T$.
	Then $K$ is a 2-site, as one can take pullbacks of 1-arrows in $J$, and fully faithful functors are closed under pullback.
	It is an easy result \cite[lemma 4.13]{Roberts_12} that the resulting weak equivalences in the sense of definition \ref{def:weak_equiv} are the same as weak equivalences between internal categories in the sense of Bunge-Par\'e \cite{Bunge-Pare_79}. 

\end{example}

We shall need a more general definition for use later.

\begin{definition}
	\label{def:cofinal}
	Let $A$ be a class of 1-arrows in a 2-category, and $A'$ a subclass.
	We say $A'$ is \emph{cofinal} in $A$ if for every $f\colon x\to y$ in $A$, there is a $g\colon z\to y$ in $A'$ and an $s\colon z\to x$ such that $f\circ s \simeq g$.
	If for every object $y$, the arrows in $A'$ with codomain $y$ comprise a set, we say $A'$ is a \emph{locally small} cofinal class.

\end{definition}

Thus $J$ is cofinal in $W_J$, but we will later use classes $J' \subset J$ that do not give the structure of a 2-site as above.

Given a 2-category (or bicategory) $B$ with a class $W$ of 1-arrows, we say that a 2-functor $Q\colon B \to \widetilde{B}$ is a \emph{localisation of $B$ at $W$} if it sends the 1-arrows in $W$ to equivalences in $\widetilde{B}$ and is universal with this property.
This latter means that for any bicategory $A$
precomposition with $Q$,
\[
	Q^* \colon \Bicat(\widetilde{B},A) \to \Bicat_W(B,A),
\]
is an equivalence of hom-bicategories, with $\Bicat_W$ meaning the full sub-bicategory on those 2-functors sending arrows in $W$ to equivalences. 
The definition of a bicategory of fractions of \cite{Pronk_96} gives a reasonably convenient way to calculate the localisation at a class of arrows, satisfying properties as follows:

\begin{enumerate}[BF1]
       \item
            $W$ contains all equivalences;
       \item
            $W$ is closed under composition and isomorphism;
       \item
            for all $w\colon a' \to a,\ f\colon c \to a$ with $w\in W$ there
            exists a 2-commutative square
               \[
               \xymatrix{
					p \ar[d]_v \ar[r] & a'\ar[d]^w_{\ }="s" \\
					c \ar[r]_f^{\ }="t" & a
					\ar@{=>}_{\simeq}"s";"t"
				}
				\]
               with $v\in W$;
       \item

            if $\alpha\colon w \circ f \Rightarrow w \circ g$ is a 2-arrow and $w\in W$ there is a 1-cell $v \in W$ and a 2-arrow $\beta\colon f\circ v \Rightarrow g \circ v$ such that $\alpha\circ v = w \circ \beta$.

			\noindent Moreover:
			when $\alpha$ is an isomorphism, we require $\beta$ to be an isomorphism too; when $v'$ and $\beta'$ form another such pair, there exist 1-cells $u,\,u'$ such that $v\circ u$ and $v'\circ u'$ are in $W$, and an isomorphism $\epsilon\colon v\circ u \Rightarrow v' \circ u'$ such that the following diagram commutes:

               \begin{equation}\label{2cf4.diag}
                    \xymatrix{
                           f \circ v \circ u \ar@{=>}[r]^{\beta\circ u}
                           \ar@{=>}[d]_{f\circ \epsilon}^\simeq &
                           g\circ v \circ u
                           \ar@{=>}[d]^{g\circ \epsilon}_\simeq \\
                           f\circ v' \circ u' \ar@{=>}[r]_{\beta'\circ u'} &
                           g\circ v' \circ u'
                   }
               \end{equation}
\end{enumerate}
If BF1--BF4 hold, we say $(B,W)$ \emph{admits a bicategory of fractions}.
Given such a pair $(B,W)$, Pronk constructed a new bicategory $B[W^{-1}]$ with the same objects as $B$ and a functor $U\colon B \to B[W^{-1}]$ that is a localisation of $B$ at $W$. 
We will describe the (underlying graphs of the) hom-categories of $B[W^{-1}]$, since this is the most detail we need for the results below.

Let $x$ and $y$ be objects of $B[W^{-1}]$ (which are just objects of $B$).
The 1-arrows from $x$ to $y$ are spans
\[
	x \xleftarrow{w} u \xrightarrow{f} y
\]
where $w\in W$. 
The 2-arrows $(w_1,f_1) \Rightarrow (w_2,f_2)$ are represented by diagrams
\[
	\xymatrix{
		& u_1 \ar[dl]_{w_1}^(.6){\ }="s1" \ar[dr]^{f_1}_(.6){\ }="s2"\\
		x & v \ar[u]_{p_1} \ar[d]^{p_2} & y\\
		& u_2 \ar[ul]^{w_2}_(.6){\ }="t1" \ar[ur]_{f_2}^(.6){\ }="t2"
		\ar@{=>}"s1";"t1"^\alpha
		\ar@{=>}"s2";"t2"_\beta
	}
\]
where $w_i\circ p_i$ is in $W$ for $i=1,2$ and $\alpha$ is invertible.
Two such diagrams, with data $(v,p_1,p_2,\alpha,\beta)$ and $(v',p'_1,p'_2,\alpha',\beta')$, are equivalent when there exists a diagram
\[
	\xymatrix{
		u_1 & \ar[l]_{p'_1} v'\\
		v \ar[u]^{p_1} \ar[d]_{p_2} & 
		t 	\ar[l]_{q}="t1"^{\ }="s2" 
			\ar[u]_{q'}^(.6){\ }="s1"
			\ar[d]^{q'}_(.6){\ }="t2" \\
		u_2 & \ar[l]^{p'_2}_{\ } v'
		\ar@{=>}"s1";"t1"_{\gamma_1}
		\ar@{=>}"s2";"t2"_{\gamma_2}
	}
\]
where $\gamma_1$ and $\gamma_2$ are invertible, $w_1\circ p_1\circ q$ and $w_1\circ p'_1\circ q'$ are in $W$ and 
\[
\noindent\makebox[\textwidth]{
	\raisebox{36pt}{
		\xymatrix{
			& \ar[dl]_{w_1}^{\ }="s3" u_1 & \ar[l]_{p'_1} v'\\
			x &v \ar[u]_{p_1} \ar[d]^{p_2} & 
			t 	\ar[l]_{q}="t1"^{\ }="s2" 
				\ar[u]_{q'}^(.6){\ }="s1"
				\ar[d]^{q'}_(.6){\ }="t2" \\
			& \ar[ul]^{w_2}_{\ }="t3" u_2 & \ar[l]^{p'_2}_{\ } v'
			\ar@{=>}"s1";"t1"_{\gamma_1}
			\ar@{=>}"s2";"t2"_{\gamma_2}
			\ar@{=>}"s3";"t3"_\alpha
		}
	}
	\; = \;
	\raisebox{36pt}{
		\xymatrix{
			& \ar[dl]_{w_1}^{\ }="s" u_1 \\
			x & v' \ar[u]_{p'_1} \ar[d]^{p'_2} & \ar[l]_{q'} t\;,\\
			& u_2 \ar[ul]^{w_2}_{\ }="t"
			\ar@{=>}"s";"t"_{\alpha'}
		}
	}
	\quad
	\raisebox{36pt}{
		\xymatrix{
			 v'  \ar[r]^{p'_1} & u_1 \ar[dr]^{f_1}_{\ }="s3" \\
			t \ar[u]^{q'}_(.6){\ }="s1" \ar[d]_{q'}^(.6){\ }="t2" \ar[r]^{q}="t1"_{\ }="s2" & 
			v 	\ar[u]^{p_1}^(.6){\ }
				\ar[d]_{p_2}_(.6){\ }& y\\
			 v' \ar[r]_{p'_2}^{\ } & u_2 \ar[ur]_{f_2}^{\ }="t3"
			\ar@{=>}"s1";"t1"^{\gamma_1}
			\ar@{=>}"s2";"t2"^{\gamma_2}
			\ar@{=>}"s3";"t3"^\beta
		}
	}
	\; = \;
	\raisebox{36pt}{
		\xymatrix{
			 & u_1 \ar[dr]^{f_1}_{\ }="s3" \\
			t  \ar[r]^{q'}="t1"_{\ }="s2" & 
			v' 	\ar[u]^{p'_1}^(.6){\ }
				\ar[d]_{p'_2}_(.6){\ }& y\; .\\
			  & u_2 \ar[ur]_{f_2}^{\ }="t3"
			\ar@{=>}"s3";"t3"^{\beta'}
		}
	}
}\]
We then define a 2-arrow of $B[W^{-1}]$ to be an equivalence class of diagrams.

The following lemma is stated in more generality in \cite{Tommasini_I}, but we shall merely state it in terms that we need here. 

\begin{lemma}[Tommasini, \cite{Tommasini_I}, Lemma 6.1]
	\label{Tommasini_lemma}

	Let $(K,W)$ be a 2-category admitting a bicategory of fractions.
	Let $F_i = (x\xleftarrow{w_i} u_i \xrightarrow{f_i} y)$, $i=1,2$ be 1-arrows in $K[W^{-1}]$, and 
	\[
       \xymatrix{
			v \ar[d]_{p_2} \ar[r]^{p_1} & u_1\ar[d]^{w_1}_{\ }="s" \\
			u_2 \ar[r]_{w_2}^{\ }="t" & x
			\ar@{=>}_{\alpha}"s";"t"
		}
	\]
	be a chosen filler, using BF3, such that $w_i\circ p_i\in W$, $i=1,2$. Then any 2-arrow $F_1 \Rightarrow F_2$ in $K[W^{-1}]$ is represented by a diagram of the form
	\[
		\xymatrix{
			& u_1 \ar[dl]^{\ }="s"  & \ar@{=}[l] u_1 \ar[dr]_{\ }="s2"^{f_1}\\
			x & v \ar[u]_{p_1} \ar[d]^{p_2}& v' \ar[l]_{q'} \ar[u] \ar[d]& y\\
			&u_2 \ar[ul]_{\ }="t"  & \ar@{=}[l] u_2 \ar[ur]^{\ }="t2"_{f_2}
			\ar@{=>}"s";"t"_\alpha
			\ar@{=>}"s2";"t2"^\beta
		}
	\]
	where $q'\in W$.

\end{lemma}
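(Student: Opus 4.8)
The plan is to take an arbitrary representative of $\theta\colon F_1\Rightarrow F_2$ and deform it, using only BF3 and BF4, until its left--hand half becomes the chosen filler precomposed with a single weak equivalence. So write $\theta$ as the equivalence class of a diagram with apex $\bar v$, legs $r_i\colon\bar v\to u_i$, an invertible $2$-cell $A\colon w_1 r_1\Rightarrow w_2 r_2$ and a $2$-cell $B\colon f_1 r_1\Rightarrow f_2 r_2$, where $w_i\circ r_i\in W$ (and recall $w_1,w_2\in W$ too, being the backward legs of $F_1,F_2$). The first step relates $\bar v$ to $v$: applying BF3 --- whose square may, as in its statement above, be taken to $2$-commute up to an \emph{isomorphism} --- to the weak equivalence $w_1\circ r_1\colon\bar v\to x$ and to $w_1\circ p_1\colon v\to x$ yields an object $P$, arrows $e\colon P\to\bar v$ and $\ell\colon P\to v$ with $\ell\in W$, and an invertible $2$-cell $\lambda\colon w_1 r_1 e\Rightarrow w_1 p_1\ell$.

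The central step strips the leading $w_1$ from $\lambda$ and then transports the resulting comparison over to the $u_2$--legs. Applying BF4 to $w_1$ and the isomorphism $\lambda$ gives a weak equivalence $n\colon P'\to P$ and an invertible $2$-cell $\gamma_1\colon r_1 e n\Rightarrow p_1\ell n$ with $w_1\circ\gamma_1=\lambda\circ n$. Pasting in vertical succession the $2$-cells $(A\circ en)^{-1}$, then $\lambda\circ n$, then $\alpha\circ\ell n$, produces an invertible $2$-cell $\delta\colon w_2 r_2 e n\Rightarrow w_2 p_2\ell n$; a second application of BF4, now to $w_2$ and $\delta$, gives a weak equivalence $m\colon P''\to P'$ and an invertible $\gamma_2\colon r_2 e n m\Rightarrow p_2\ell n m$ with $w_2\circ\gamma_2=\delta\circ m$. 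Set $v':=P''$, set $q':=\ell\circ n\circ m\colon v'\to v$, which lies in $W$ by BF2, set $\mathsf a:=e\circ n\circ m\colon v'\to\bar v$, and let $\beta\colon f_1 p_1 q'\Rightarrow f_2 p_2 q'$ be the vertical composite of $f_1\circ(\gamma_1\circ m)^{-1}$, then $B\circ\mathsf a$, then $f_2\circ\gamma_2$.

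It then remains to observe that the diagram with apex $v'$, legs $p_i\circ q'$, and $2$-cells $\alpha\circ q'$ and $\beta$ does the job. It is a legitimate representing diagram, since $w_i\circ p_i\circ q'\in W$ (as $w_i\circ p_i\in W$ and $q'\in W$) and $\alpha\circ q'$ is invertible; it has exactly the shape displayed in the statement, with $q'\in W$; and it represents $\theta$, which one checks by exhibiting the comparison with the original diagram, in the sense of the equivalence of representing diagrams defined above, given by the arrows $\mathsf a\colon v'\to\bar v$ and $\id_{v'}\colon v'\to v'$ together with the invertible $2$-cells $(\gamma_1\circ m)^{-1}$ and $\gamma_2$. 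Indeed, for the two ``belongs to $W$'' side--conditions, $w_1 r_1\mathsf a$ is isomorphic, via $\lambda$ whiskered by $nm$, to $w_1 p_1 q'\in W$, while $w_1(p_1 q')\id_{v'}=w_1 p_1 q'\in W$; the $\alpha$--compatibility equation holds because, by the construction of $\delta$ and $\gamma_2$, its left side (built from $A$, $\gamma_1$ and $\gamma_2$) collapses to its right side $\alpha\circ q'$; and the $\beta$--compatibility equation holds by the very definition of $\beta$.

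The step I expect to cost the most care is this last one: reading off from Pronk's criterion for two representing diagrams to represent the same $2$-cell the exact whiskerings and the orientations of the two compatibility $2$-cells, and confirming that the two invocations of BF4 have been arranged so that a \emph{single} comparison arrow $\mathsf a$, with trivial second leg $\id_{v'}$, makes both compatibilities hold while the accumulated refinements $\ell$, $n$, $m$ still compose to a weak equivalence $q'$. No ingredient beyond BF3 (in the isomorphism-strengthened form used above) and BF4 is needed; the content is entirely in organising the pastings.
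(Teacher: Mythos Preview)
The paper does not give its own proof of this lemma; it is quoted from Tommasini's preprint, with only the remark that inspection of Tommasini's argument shows $q'\in W$. So there is no in-paper proof to compare against.

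That said, your argument is correct and is the expected one. The only places worth a second look are bookkeeping: in the equivalence-of-representatives check, the two required $W$-conditions are that $w_1\circ r_1\circ \mathsf a$ and $w_1\circ (p_1 q')\circ \id_{v'}$ lie in $W$, and you have both (the first via BF2 and the isomorphism $\lambda\circ nm$ to $w_1 p_1 q'$, the second directly). The $\alpha$-compatibility reduces, after substituting $w_1\circ(\gamma_1 m)=\lambda\circ nm$ and $w_2\circ\gamma_2=\delta\circ m$ and expanding your definition of $\delta$, to a telescoping cancellation leaving exactly $\alpha\circ q'$; and the $\beta$-compatibility is literally your definition of $\beta$. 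So the ``careful step'' you flag is indeed routine once the orientations are fixed, and nothing beyond BF2, BF3 and BF4 (the latter twice, with its isomorphism clause) is used.
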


The conclusion of the lemma in \cite{Tommasini_I} does not mention that $q'\in W$, but examination of the proof shows that it is so.

\section{Results}

The first main result is as follows:

\begin{theorem}
	\label{thm:2-sites_have_fractions}

	A 2-site $(K,J)$ admits a bicategory of fractions for $W_J$.

\end{theorem}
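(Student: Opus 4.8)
The plan is to verify conditions BF1--BF4 directly for the pair $(K,W_J)$, using throughout the single fact that, by definition \ref{def:weak_equiv}, every member of $W_J$ is an ff 1-arrow, so that postcomposition by it is a fully faithful functor between hom-categories. First I would record the preliminary that $J\subseteq W_J$: any $q\colon u\to x$ in $J$ is ff by clause (iii) of definition \ref{def:strict_pretopology}, and it is $J$-locally split via the section $\id_u$ together with $q$ itself, which lies in $J$ by clause (i); in particular every identity 1-arrow is in $W_J$. Then BF1 is immediate: an equivalence $e$ is ff because postcomposition by it is an equivalence of hom-categories, and it is $J$-locally split using $\id\in J$ and a quasi-inverse of $e$. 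For BF2 I would view $W_J$ as the intersection of the class of ff 1-arrows with the class of $J$-locally split 1-arrows. The former is closed under composition and 2-isomorphism because postcomposition is functorial in the 1-arrow and full faithfulness is invariant under natural isomorphism. For the latter, closure under 2-isomorphism comes from whiskering the defining triangle, and closure under composition from the following: given $J$-local splittings $(q\colon u\to y,\ s\colon u\to x)$ of $f\colon x\to y$ and $(q'\colon u'\to z,\ s'\colon u'\to y)$ of $g\colon y\to z$, apply clause (ii) to $q$ and $s'$ to obtain a square with object $p$, covering leg $b\colon p\to u'$ and other leg $a\colon p\to u$; then $q'\circ b\in J$ by clause (i), and $s\circ a$ is a section of $g\circ f$ over it, as a short chase with the structural $2$-isomorphisms shows.

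BF3 carries the real geometric content but is still short. Given $w\colon a'\to a$ in $W_J$ and $f\colon c\to a$, use that $w$ is $J$-locally split to obtain $q\colon u\to a$ in $J$ with a section $s\colon u\to a'$ (so $w\circ s\simeq q$), then apply clause (ii) to $q$ and $f$ to produce a square with object $p$, covering leg $k\colon p\to c$ and other leg $r\colon p\to u$ (so $q\circ r\simeq f\circ k$). The square with legs $k$ and $s\circ r\colon p\to a'$ is then $2$-commutative, since $w\circ s\circ r\simeq q\circ r\simeq f\circ k$, and $k\in J\subseteq W_J$ --- which is exactly BF3. Note that honest pullbacks are never needed, only the weak pullbacks supplied by clause (ii).

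For BF4 the ``moreover'' clauses might look like the main obstacle, but they collapse once one notices that the refining 1-cell can always be taken to be an identity. Write $w\colon b\to a$ and $f,g\colon c\to b$. Given $\alpha\colon w\circ f\Rightarrow w\circ g$ with $w\in W_J$, full faithfulness of postcomposition by $w$ produces a unique $\beta\colon f\Rightarrow g$ with $w\circ\beta=\alpha$, so $v:=\id_c$ and this $\beta$ satisfy $\alpha\circ v=w\circ\beta$; and if $\alpha$ is invertible then so is $\beta$, because fully faithful functors reflect isomorphisms. For the essential-uniqueness clause, given any other pair $(v',\beta')$ with $\alpha\circ v'=w\circ\beta'$, take $u:=v'$, $u':=\id$ and $\epsilon:=\id_{v'}$: then $v\circ u=v'=v'\circ u'$ lies in $W_J$, and diagram \eqref{2cf4.diag} reduces to the single equation $\beta\circ v'=\beta'$, which in turn follows from faithfulness of postcomposition by $w$ applied to $w\circ(\beta\circ v')=(w\circ\beta)\circ v'=\alpha\circ v'=w\circ\beta'$. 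Hence BF1--BF4 all hold. I expect the only place needing genuine care to be the bookkeeping of structural $2$-isomorphisms in the composition half of BF2 and in BF3; the apparent difficulty of BF4 evaporates thanks to full faithfulness of the arrows of $W_J$, which is precisely what the ``fully faithful'' in the hypothesis buys us.
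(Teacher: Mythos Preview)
Your proof is correct and follows essentially the same approach as the paper: both verify BF1--BF4 directly, using full faithfulness of weak equivalences to dispatch BF4 with $v=\id$ and handling BF2--BF3 by completing appropriate cospans via clause~(ii). Your explicit preliminary observation that $J\subseteq W_J$ and your organisation of BF2 as an intersection of two separately-verified classes are minor expository improvements, but the underlying argument is identical.
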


\begin{proof} 
We verify the conditions in the definition of a bicategory of fractions.

\begin{enumerate}[BF1]
	\item
	An internal equivalence $f\colon x\to y$ is clearly $J$-locally split. 
	Let $g\colon y\to x$ be a pseudoinverse to $f$, and let $w$ be some object of $K$. 
	Then $g_*$ is a pseudoinverse to $f_*$, where $f_*\colon K(w,x)\to K(w,y)$ is post-composition with $f$ (and analogously with $g_*$). 
	But then it is a well-known fact that equivalences of categories are fully faithful, and so $f$ is a ff 1-arrow.

	\item
	That the composition of ff 1-arrows is again ff, and that ff 1-arrows are closed under isomorphism follows from the analogous fact for fully faithful functors between categories. 
	So we only need to show the same for $J$-locally split arrows. 
	Consider the composition $g\circ f$ of two $J$-locally split arrows,
	\[
		\xymatrix{
			u \ar[d] \ar@/^.5pc/[dr]_{\ }="s1"^(.35){q}&v \ar[d]^s
			\ar@/^.5pc/[dr]_(.5){\ }="s2"^(.4){p}& \\
			x\ar[r]_{f}^(.33){\ }="t1" & y \ar[r]_{g}^(.33){\ }="t2" & z
			\ar@{=>}"s1";"t1"
			\ar@{=>}"s2";"t2"
		}
	\]
	The cospan $u\xrightarrow{q}y\xleftarrow{s} v$ completes to a 2-commuting square with top arrow $w \to v$ in $J$. 
	The composite $w \to z$ is in $J$, all 2-arrows are invertible, hence $g\circ f$ is $J$-locally split.

	Let $w,f\colon x\to y$ be 1-arrows, $w$ be $J$-locally split and $a\colon w \Rightarrow f$ invertible. 
	It is immediate from the diagram
	\[
		\xymatrix{
			u \ar[dd] \ar@/^.7pc/[ddrr]_{\ }="s1"^{u} \\
			\\
			x\ar@/^1pc/[rr]^{w}="t1"_{\ }="s2" \ar@/_1pc/[rr]_{f}^{\ }="t2"
			&&y
			\ar@{=>}"s1";"t1"
			\ar@{=>}"s2";"t2"^{a}
		}
	\]
	that $f$ is also $J$-locally split.

	\item
	Let $w\colon x\to y$ be a weak equivalence, and let $f\colon z\to y$ be any other 1-arrow. 
	From the definition of $J$-locally split, we have the diagram
	\[
		\xymatrix{
			u \ar[d] \ar@/^.5pc/[dr]_{\ }="s1"^{q}& \\
			x\ar[r]_{w}^(.33){\ }="t1"&y
			\ar@{=>}"s1";"t1"
		}
	\]
	We complete the cospan to get a 2-commuting diagram
	\[
		\xymatrix{
			& w \ar[dr]^{p}_{\ }="s2" \ar[dl] \\
			u \ar[d] \ar@/^.5pc/[dr]_{\ }="s1"^{q}& & z \ar[dl]^f_{\ }="t2"\\
			x\ar[r]_{w}^(.33){\ }="t1" & y
			\ar@{=>}"s1";"t1"
			\ar@{=>}"s2";"t2"_\alpha
		}
	\]
	with $p\in J$, $\alpha$ invertible, and by the trivial observation $J\subset W_J$, we have $p\in W_J$ and a 2-commuting square as required.

	\item
	Since $J$-equivalences are ff, given
	\[
	       \xymatrix{
	               &y \ar[dr]^w \\
	               x \ar[ur]^f \ar[dr]_g & \Downarrow \alpha & z\\
	               & y \ar[ur]_w
	       }
	\]
	where $w\in W_J$, there is a unique $\beta\colon f \Rightarrow g$ such that
	\[
	        \raisebox{36pt}{
	        \xymatrix{
	               &y \ar[dr]^w \\
	               x \ar[ur]^f \ar[dr]_g & \Downarrow \alpha & z\\
	               & y \ar[ur]_w
	       }
	       }
	       \equals
	        \raisebox{36pt}{
	       \xymatrix{
		       &&\\
		       x \ar@/^1.5pc/[rr]^f \ar@/_1.5pc/[rr]_g&
		       \Downarrow \beta & y \ar[r]^w & z \,.
	       }
	       }
	\]
	The existence of $\beta$ is the first half of BF4, with $v=\id_x$.
	Note that if $\alpha$ is an isomorphism, so is $\beta$, since $w$ is ff.
	Given $v'\colon t\to x \in W_J$ such that there is a 2-arrow
	\[
	       \xymatrix{
	               &x \ar[dr]^f \\
	               t \ar[ur]^{v'} \ar[dr]_{v'} & \Downarrow \beta' & y\\
	               & x \ar[ur]_g
	       }
	\]
	satisfying
	\begin{align*}
	        \raisebox{36pt}{
	       \xymatrix{
	               & x \ar[dr]^f \\
	               w \ar[ur]^{v'} \ar[dr]_{v'} & \Downarrow \beta' & y \ar[r]^w & z\\
	               & x \ar[ur]_g
	       }
	       }
	       \equals &
	        \raisebox{36pt}{
	        \xymatrix{
	               && y \ar[dr]^w \\
	               t \ar[r]^{v'} & x \ar[ur]^f \ar[dr]_g & \Downarrow \alpha & z\\
	               && y \ar[ur]_w
	       }
	        }      \nonumber \\
	        \equals &
	        \raisebox{36pt}{
	        \xymatrix{
	               &&\\
	               t\ar[r]^{v'}& x \ar@/^1.5pc/[rr]^f \ar@/_1.5pc/[rr]_g &\Downarrow \beta &
	               y \ar[r]^w & z
	       }
	        }\, ,
	\end{align*}
	then the fact $w$ is ff gives us
	\[
	        \raisebox{36pt}{
	       \xymatrix{
	               & x \ar[dr]^f \\
	               t \ar[ur]^{v'} \ar[dr]_{v'} & \Downarrow \beta' & y \\
	               & x \ar[ur]_g
	       }
	       }
	        \equals
	        \raisebox{36pt}{
	        \xymatrix{
	               &&\\
	               t\ar[r]^{v'}&x \ar@/^1.5pc/[rr]^f \ar@/_1.5pc/[rr]_g
	               &\Downarrow \beta
	               & y
	       }
	        }\, .
	\]
	This is precisely the diagram (\ref{2cf4.diag}) with $v=\id_x$, $u=v'$, $u'=\id_w$ and $\epsilon$ the identity 2-arrow. 
	Hence BF4 holds.
\end{enumerate}
\end{proof}

This theorem should be compared with the theorem in the paper \cite{Abbad-Vitale_13} (written in independently and in parallel with the present work). 
The authors show there that given a class $\Sigma$ of ff arrows in a bicategory satisfying certain conditions, there is a bicategory of fractions for $\Sigma$.
The class of arrows $W_J$ satisfies the conditions for a \emph{faithful calculus of fractions} \cite[Definition 2.4]{Abbad-Vitale_13}, using similar arguments as the preceeding proof.
The characterisation of $W_J$ as arising from a class $J$ as in definition \ref{def:strict_pretopology} is a means to arrive at a multitude of examples.

One would like to know if the localisation of $K$ at the weak equivalences is locally essentially small.

\begin{proposition}
	\label{prop:loc_ess_small}
	
	Let $K$ be a locally essentially small 2-category with a class of 1-arrows $W$ satisfying BF1--BF4. 
	If there is a locally small cofinal set $V \subset W$ then $K[W^{-1}]$ is locally essentially small.

\end{proposition}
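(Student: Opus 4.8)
The plan is to verify the two conditions whose conjunction says that a category is equivalent to a small one: that each hom-category $K[W^{-1}](x,y)$ has only a small set of isomorphism classes of objects, and that it is locally small. I will treat these in turn, using the explicit description of the $1$- and $2$-cells of the bicategory of fractions recalled above — in particular the standard fact that such a $2$-cell is invertible precisely when its component $\beta$ is — together with cofinality of $V$ in $W$ and, for the second condition, Tommasini's Lemma~\ref{Tommasini_lemma}.

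For the objects, start from a $1$-cell $(w,f)\colon x\to y$ with $w\colon u\to x$ in $W$. Cofinality of $V$ produces $g\colon z\to x$ in $V$ and $s\colon z\to u$ with $w\circ s\simeq g$. Taking apex $z$, legs $s$ and $\id_z$, structure $2$-cell the given isomorphism $w\circ s\simeq g$, and $\beta=\id$, one obtains an invertible $2$-cell $(w,f)\Rightarrow(g,f\circ s)$; moreover $(g,h)\cong(g,h')$ in $K[W^{-1}]$ whenever $h\cong h'$ in $K$. By hypothesis the arrows of $V$ with codomain $x$ form a set $V_x$, and for each $g\in V_x$ with source $z$ the category $K(z,y)$, being equivalent to a small category, has a small set of isomorphism classes of objects; choosing one representative $h$ per class exhibits the set $\{(g,h):g\in V_x\}$ as meeting every isomorphism class of objects of $K[W^{-1}](x,y)$. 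It is essential to pass to isomorphism classes of the $h$: a hom-category of $K$ may have a proper class of objects while still being equivalent to a small category.

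For local smallness, fix $F_i=(x\xleftarrow{w_i}u_i\xrightarrow{f_i}y)$, $i=1,2$, and choose via BF3 a filler $(v,p_1,p_2,\alpha)$ of the cospan $u_1\xrightarrow{w_1}x\xleftarrow{w_2}u_2$ with $w_i\circ p_i\in W$. By Lemma~\ref{Tommasini_lemma} every $2$-cell $F_1\Rightarrow F_2$ has a representative in which everything is determined by this fixed filler except for a single arrow $q'\colon v'\to v$ with $q'\in W$ and a single $2$-cell $\beta\colon f_1\circ p_1\circ q'\Rightarrow f_2\circ p_2\circ q'$. Applying cofinality to $q'\in W$ gives $g\colon z\to v$ in $V$ and $s\colon z\to v'$ with $q'\circ s\simeq g$; precomposing the representative with $s$, and then replacing $q'\circ s$ by the isomorphic arrow $g$ — both being instances of Pronk's equivalence relation on $2$-cell representatives, legitimate because $w_1\circ p_1\circ q'\circ s$ (hence every composite that must lie in $W$) still does — shows that every $2$-cell $F_1\Rightarrow F_2$ is already represented by a pair $(g,\beta)$ with $g$ in the set $V_v$ of $V$-arrows into $v$ and $\beta$ a $2$-cell of $K(z,y)$. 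Since $V_v$ is a set and each such $K(z,y)$ is locally small, there are only set-many such pairs, hence set-many $2$-cells $F_1\Rightarrow F_2$.

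The main obstacle, and the reason for invoking Tommasini's lemma rather than the bare quintuple description of $2$-cells, is precisely that "equivalent to a small category" does not bound the object class: in a raw representative $(v',p_1',p_2',\alpha',\beta')$ the apex $v'$ and both legs $p_i'$ range over potential proper classes, and a single application of cofinality cannot tame both legs simultaneously. Tommasini's normalisation leaves only one arrow $q'$ and one $2$-cell free, so that one use of cofinality together with the routine "replace an arrow by an isomorphic one" move suffices; the one point needing genuine care is checking, against the pasting-diagram definition of the equivalence relation reproduced above, that these two moves really do produce equivalent $2$-cell representatives.
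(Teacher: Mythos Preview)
Your proof is correct and follows essentially the same route as the paper's: replace the backward leg of a fraction by one from $V$ via cofinality to bound isomorphism classes of $1$-cells, then invoke Lemma~\ref{Tommasini_lemma} to reduce a $2$-cell representative to a pair $(q',\beta)$ and again use cofinality to replace $q'$ by an arrow from $V$. Your presentation is slightly more explicit in places (the construction of the invertible $2$-cell in step one, and the remark on why passing to isomorphism classes of $h$ is essential), and your closing paragraph explaining why Tommasini's normalisation is needed is a useful addition, but the argument is the same.
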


\begin{proof}
	First given any fraction $x \xleftarrow{w} u \to y$ (with $w\in W_J$) giving a 1-arrow in Pronk's construction \cite[\S\S 2.2, 2.3]{Pronk_96} of $K[W^{-1}]$, there is an isomorphic 1-arrow $x\xleftarrow{v} t\to y$ where $v\in V$.
	Thus there are only set-many choices of backwards-pointing arrows with which to form fractions, and by local essential smallness of $K$, only set-many fractions from $x$ to $y$ up to isomorphism.

	To show the hom-categories $K[W^{-1}](x,y)$ are locally small, given a pair of fractions $x \xleftarrow{w_i} u_i \xrightarrow{f_i} y$, choose a filler for the cospan $u_1 \to x \leftarrow u_2$, namely
	\[
		\xymatrix{
			v \ar[r]^{p_2} \ar[d]_{p_1} & u_2 \ar[d]^{w_2}_{\ }="s"\\
			u_1 \ar[r]_{w_1}^{\ }="t" & x\;.
			\ar@{=>}"s";"t"_\alpha
		}
	\]
	Then arrows in the hom-category are given, by lemma \ref{Tommasini_lemma}, by the data of an arrow $q\colon v'\to v \in W$ and a 2-arrow $\beta\colon f_1\circ p_1 \circ q \Rightarrow f_2 \circ p_2 \circ q$ in $K$. 
	Fixing $q$, there are only set-many arrows as shown, since $K$ is locally essentially small. 
	Hence if we show a 2-arrow given by $(q,\beta)$ is also given (using the equivalence relation defining the 2-arrows of $K[W^{-1}]$) by $(r,\gamma)$ where $r\in V$, the hom-category is locally small.
	Assume we have an arrow given by the data $(q,\beta)$, and we have a second $r\in W$ and a diagram
	\[
		\xymatrix{
			& v'\ar[d]^q_{\ }="s"\\
			v_0 \ar[r]_r^{\ }="t" \ar[ur]^s & v
			\ar@{=>}"s";"t"_\phi
		}
	\]
	with $\phi$ invertible. 
	Defining $\gamma$ as
	\[
		\xymatrix{
			& v \ar[r] & u_1 \ar[dr]_(.3){\ }="s2" \\
			v_0 \ar[ur]^r_(.6){\ }="s1" \ar[dr]_r^(.6){\ }="t3" \ar[r]_(.9){\ }="s3"^(.9){\ }="t1" & v' \ar[u] \ar[d]&& y\\
			& v \ar[r] & u_2 \ar[ur]^(.3){\ }="t2"
			\ar@{=>}"s1";"t1"_{\phi^{-1}}
			\ar@{=>}"s2";"t2"_\beta
			\ar@{=>}"s3";"t3"_\phi
		}
	\]
	then one can use the span $v_0 \xleftarrow{=} v_0 \xrightarrow{s} v'$ and the 2-arrows $\phi$ and $\phi^{-1}$ to show that $(r,\gamma)$ defines the same 2-cell of $K[W^{-1}]$ as $(q,\beta)$.
\end{proof}

\begin{corollary}
	Given a 2-site $(K,J)$, if $K$ is locally essentially small and  $J$ has a locally small cofinal set, then $K[W_J^{-1}]$ is locally essentially small.
\end{corollary}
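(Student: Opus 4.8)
The plan is to obtain the corollary as a formal consequence of Theorem~\ref{thm:2-sites_have_fractions} and Proposition~\ref{prop:loc_ess_small}; the only point requiring an argument is the transfer of a locally small cofinal set from $J$ to $W_J$. First I would apply Theorem~\ref{thm:2-sites_have_fractions} to conclude that $(K,W_J)$ satisfies BF1--BF4, so that Pronk's construction $K[W_J^{-1}]$ is available and Proposition~\ref{prop:loc_ess_small} is applicable to the pair $(K,W_J)$, provided one supplies a locally small cofinal set $V\subset W_J$.

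Next I would record that cofinality, in the sense of Definition~\ref{def:cofinal}, is transitive and preserves local smallness of the cofinal class: if $A'$ is cofinal in $A$ and $A$ is cofinal in $B$, then given $f\colon x\to y$ in $B$ one first finds $g\colon z\to y$ in $A$ and $s\colon z\to x$ with $f\circ s\simeq g$, then $h\colon w\to y$ in $A'$ and $t\colon w\to z$ with $g\circ t\simeq h$; whiskering and pasting the two comparison isomorphisms gives $f\circ(s\circ t)\simeq h$, so $A'$ is cofinal in $B$. Since the arrows of $A'$ into any fixed object are the same regardless of whether $A'$ is viewed as cofinal in $A$ or in $B$, a locally small cofinal class in $A$ remains one in $B$. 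Applying this with $A'$ the hypothesised locally small cofinal set $J'\subset J$, with $A=J$, and with $B=W_J$ (using that $J$ is cofinal in $W_J$, as noted after Definition~\ref{def:cofinal}) shows that $J'$ is a locally small cofinal set in $W_J$. Moreover $J'\subset J\subset W_J$, the last inclusion being the trivial observation used in the proof of BF3 within Theorem~\ref{thm:2-sites_have_fractions}, so we may take $V:=J'$.

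Finally I would invoke Proposition~\ref{prop:loc_ess_small} with $W=W_J$ and $V=J'$: $K$ is locally essentially small by hypothesis, $(K,W_J)$ satisfies BF1--BF4 by Theorem~\ref{thm:2-sites_have_fractions}, and $V\subset W_J$ is a locally small cofinal set, whence $K[W_J^{-1}]$ is locally essentially small, as required. I do not anticipate any genuine obstacle: the proof is purely a matter of chaining the two earlier results, and the only step needing a moment's care is the transitivity of (locally small) cofinality, which is the short diagram-pasting argument above.
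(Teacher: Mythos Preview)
Your argument is correct and is exactly the intended derivation: the paper states the corollary without proof, as an immediate consequence of Proposition~\ref{prop:loc_ess_small} together with the observation (made just after Definition~\ref{def:cofinal}) that $J$ is cofinal in $W_J$. Your explicit verification of transitivity of cofinality and its preservation of local smallness is the only detail left implicit, and it is handled correctly.
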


Notice that local essential smallness in \emph{not} automatic. 

\begin{example}
	Take the category $\Sch$ of schemes (over a base scheme, if one likes).
	The singleton pretopology of \emph{fpqc} maps (see e.g.\ \cite[\S 2.3]{Vistoli_05}) is such that $(\Gpd(\Sch),J(\text{\emph{fpqc}}))$ does not satisfy the hypotheses of proposition \ref{prop:loc_ess_small}.
	This is because the class of \emph{fpqc} maps has no locally small cofinal set \cite[\href{http://stacks.math.columbia.edu/tag/022A}{Tag 022A}]{stacks-project}.
\end{example}

There are even categories that are \emph{a priori} even better behaved in which proposition \ref{prop:loc_ess_small} may fail to hold; for example toposes that are well-pointed and that have a natural number object.
Such toposes are models for set theory without the Axiom of Choice, and categories internal to them are simply small categories.
Examples are given by the categories of sets in models of ZF as given by Gitik (cf \cite{vdBerg-Moerdijk_14}) and Karagila \cite{Karagila_14}, or the topos constructed in the author's \cite{Roberts_15}.

There are many examples of 2-sites to which the results of this note apply, for instance \cite{Roberts_12} spends five pages discussing some of them. 
This paper will add one example that is not covered by the results of \cite{Roberts_12}.

Let $S$ be a finitely complete category with a singleton pretopology $T$.
Consider the 2-category $\Cat(S)$ of categories internal to $S$, with the structure of a 2-site given by example \ref{eg:internal_cats}.
Note that 2-sites of this form are the ones considered in the many examples in \cite{Roberts_12}.

Now fix a category $X$ in $S$, and consider the lax slice 2-category $\Cat(S)/_lX$.
This 2-category has as objects functors $Z \to X$, 1-arrows triangles
\[
	\xymatrix{
		Z_1 \ar[rr]^f_(.6){\ }="s" \ar[dr]^(.7){\ }="t" && Z_2 \ar[dl]\\
		& X
		\ar@{=>}"s";"t"^a
	}
\]
and 2-arrows $(f_1,a_1) \Rightarrow (f_2,a_2)$ given by $f_1\Rightarrow f_2 \colon Z_1 \to Z_2$, commuting with $a_1$ and $a_2$. 
Let $J_X$ be the class of 1-arrows $(w,a)$ in $\Cat(S)/_lX$ such that $w\in J$, for $J$ as given in the previous paragraph, and $a$ is invertible. 
Likewise we have the slice 2-category $\Cat(S)/X$, where arrows $(f,a)$ have $a$ invertible, and the slice 2-category $\Gpd(S)/X$ where $X$ is an internal groupoid. 
These three examples are locally small 2-categories when $S$ is a locally small category.

\begin{proposition}

	The class of arrows $J_X$ makes $(\Cat(S)/_lX,J_X)$ a 2-site. 
	The same statement holds for $(\Cat(S)/X,J_X)$ and $(\Gpd(S)/X,J_X)$, \emph{mutatis mutandis}.

\end{proposition}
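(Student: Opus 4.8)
The plan is to verify conditions (i)--(iii) of Definition~\ref{def:strict_pretopology} for $(\Cat(S)/_lX, J_X)$, reducing each to the corresponding fact for the 2-site $(\Cat(S),J)$ of Example~\ref{eg:internal_cats}. Fix notation: an object of $\Cat(S)/_lX$ is a pair $(Z,\pi_Z)$ with $\pi_Z\colon Z\to X$; a 1-arrow $(Z_1,\pi_{Z_1})\to(Z_2,\pi_{Z_2})$ is a pair $(f,a)$ with $f\colon Z_1\to Z_2$ a 1-arrow of $\Cat(S)$ and $a$ a 2-cell comparing $\pi_{Z_1}$ with $\pi_{Z_2}\circ f$; and a 2-arrow $(f_1,a_1)\Rightarrow(f_2,a_2)$ is a 2-cell $\theta\colon f_1\Rightarrow f_2$ of $\Cat(S)$ compatible with $a_1$ and $a_2$. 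By definition $(w,a)\in J_X$ iff $w\in J$ and $a$ is invertible. Condition (i) is then immediate: the identity 1-arrow $(\id_Z,\id)$ has $\id_Z\in J$, and the composite of two members of $J_X$ has underlying 1-arrow in $J$ (since $J$ is closed under composition in $\Cat(S)$) and structure 2-cell a pasting of invertible 2-cells, hence invertible.

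For condition (ii), suppose $q=(w,a)\colon(U,\pi_U)\to(Z,\pi_Z)$ lies in $J_X$ and $(g,b)\colon(Y,\pi_Y)\to(Z,\pi_Z)$ is an arbitrary 1-arrow. I would form the pullback $V:=Y\times_Z U$ in $\Cat(S)$, which exists and whose projection $p\colon V\to Y$ lies in $J$ precisely because $w\in J$ (Example~\ref{eg:internal_cats}). Equip $V$ with structure map $\pi_V:=\pi_Y\circ p$, so that the left leg of the square is $(p,\id)\in J_X$; the top leg is $(r,c)$, where $r\colon V\to U$ is the other projection and $c$ is the 2-cell obtained by pasting $b\circ p$ with the whiskering of $a^{-1}$, using invertibility of $a$. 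A direct computation shows that the two composite 1-arrows $q\circ(r,c)$ and $(g,b)\circ(p,\id)$ coincide — same underlying arrow $w\circ r=g\circ p$ and same structure 2-cell $b\circ p$ — so the square commutes on the nose, a fortiori 2-commutes, with left leg in $J_X$.

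For condition (iii), fix $q=(w,a)\in J_X$ and an object $(W,\pi_W)$, and consider $q_*$ on hom-categories. Faithfulness is immediate: a 2-arrow of the slice is in particular a 2-cell of $\Cat(S)$, and $w_*\colon\Cat(S)(W,U)\to\Cat(S)(W,Z)$ is faithful because $w\in J$ is an ff 1-arrow of $\Cat(S)$. For fullness, given a 2-arrow $q_*(h_1,e_1)\Rightarrow q_*(h_2,e_2)$ — that is, a 2-cell $\psi\colon w\circ h_1\Rightarrow w\circ h_2$ of $\Cat(S)$ compatible with the structure 2-cells — one uses fullness of $w_*$ to write $\psi=w\circ\theta$ for a unique $\theta\colon h_1\Rightarrow h_2$, and then checks that $\theta$ respects $e_1$ and $e_2$. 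This last check is the one place where invertibility of $a$ is essential: combining the compatibility of $\psi$ with the interchange law applied to $a$ and $\theta$ yields an equation that can be solved for the required identity $(\pi_U\circ\theta)\bullet e_1=e_2$ only after cancelling the invertible whiskering of $a$. Thus $q_*$ is fully faithful, so condition (iii) holds and $(\Cat(S)/_lX,J_X)$ is a 2-site.

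Finally, for the variants $(\Cat(S)/X,J_X)$ and $(\Gpd(S)/X,J_X)$ the same proof applies, with one extra remark: in step (ii) the 2-cell $c$ built from $b$ and $a^{-1}$ is invertible as soon as $b$ and $a$ are, so $(r,c)$ is again a legitimate 1-arrow of the sub-2-category; and for $\Gpd(S)$ one invokes that $(\Gpd(S),J)$ is itself a 2-site (Example~\ref{eg:internal_cats}) in place of $(\Cat(S),J)$. I expect the main obstacle to be bookkeeping rather than depth: one must pin down the variance convention for the lax-slice 2-cells and apply the interchange law for $a$ at exactly the right spot in the fullness argument, since with the orientation reversed the cancellation step — and hence condition (iii) — will appear to fail.
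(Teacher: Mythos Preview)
Your proof is correct and follows essentially the same approach as the paper's: both verify (i)--(iii) by reducing to the 2-site structure on $\Cat(S)$, construct the filler in (ii) via the strict pullback $Y\times_Z U$ with structure map factoring through $Y$ and top-leg 2-cell built from $b$ and $a^{-1}$, and handle (iii) by lifting the unique 2-cell furnished by $w$ being ff and then cancelling a whisker of $a$ to verify the slice compatibility. The only differences are notational and in emphasis; your explicit remark that the square in (ii) commutes on the nose matches what the paper asserts more tersely.
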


\begin{proof}
	That $J_X$ is closed under composition and contains identity arrows is immediate. 
	We can specify a strict pullback of a map $(w,a)\colon U\to Z$ in $J_X$ along $(f,b)\colon Y\to Z$, given a strict pullback 
	\begin{equation}\label{eq:str_pullback_for_lifting}
		\xymatrix{
			Y\times_Z U \ar[r]^{\tilde{f}} \ar[d]_{\tilde{w}} &U \ar[d]^w\\
			Y \ar[r]_f & Z
		}
	\end{equation}
	of $w$ along $f$, as follows: let the map $Y\times_Z U \to X$ be the composite $Y\times_Z U \xrightarrow{\tilde{w}} Y \to X$. 
	The map $Y\times_Z U \to Y$ in $\Cat(S)/_lX$ is $(\tilde{w},\id)$, and as $\tilde{w} \in J$ and $\id$ is invertible, this is in $J_X$ as required.
	The map $Y\times_Z U \to U$ is $(\tilde{f},c)$ where $c$ is
	\[
		\xymatrix{
		& U \ar[dr]^(.8){\ }="t1" \ar@/^1pc/[drr]_{\ }="s1"\\
		Y\times_Z U \ar[dr]_{\tilde{w}} \ar[ur]^{\tilde{f}} && Z \ar[r] & X\\
		& Y \ar[ur]_(.8){\ }="s2" \ar@/_1pc/[urr]^{\ }="t2"
		\ar@{=>}"s1";"t1"^{a^{-1}}
		\ar@{=>}"s2";"t2"^{b}
		}
	\]
	It is then easy to check that (\ref{eq:str_pullback_for_lifting}) lifts to a commuting square in $\Cat(S)/_lX$.

	To see that an arrow $(w,a)$ in $J_X$ is ff, we use the fact that given a diagram 
	\[
		\xymatrix{
		&Z_2 \ar[dr]^{(w,a)}_(.4){\ }="s" \\
		Z_1 \ar[ur]^{(f,b)} \ar[dr]_{(g,c)} && Y\\
		& Z_2 \ar[ur]_{(w,a)}^(.4){\ }="t"
		\ar@{=>}"s";"t"_{\alpha}
		}
	\]
	in $\Cat(S)/_lX$, we can find a unique $\beta\colon f\Rightarrow g\colon Z_1 \to Z_2$ in $\Cat(S)$ such that $\id_w\circ \beta = \alpha$. 
	To see that $\beta$ lifts to a 2-arrow in $\Cat(S)/_lX$, we paste $a^{-1}$ and the 2-arrow
	\[
		\xymatrix{
			Z_1 \ar@/^.8pc/[rr]^f_{\ }="s1" \ar@/_.8pc/[rr]_g^{\ }="t1" \ar@/_.4pc/[drr]^(.6){\ }="t2" && Z_2 \ar[r]^w \ar[d]_(.4){\ }="s2"^{\ }="t3" & Y \ar@/^.4pc/[dl]_(.2){\ }="s3"\\
			&& X
			\ar@{=>}"s1";"t1"^\beta
			\ar@{=>}"s2";"t2"^c
			\ar@{=>}"s3";"t3"^a
		}
	\]
	and get $b$, the required condition to give a 2-arrow in $\Cat(S)/_lX$. 
\end{proof}

\begin{corollary}
The 2-cateories $\Cat(S)/_lX$, $\Cat(S)/X$ and $\Gpd(S)/X$ admit bicategories of fractions for the classes $W_{J_X}$ of weak equivalences.
\end{corollary}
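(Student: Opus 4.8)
The plan is simply to compose the two results already in hand. By the preceding proposition, each of the three 2-categories $\Cat(S)/_lX$, $\Cat(S)/X$ and $\Gpd(S)/X$, equipped with the class $J_X$, is a 2-site in the sense of Definition~\ref{def:strict_pretopology}. Theorem~\ref{thm:2-sites_have_fractions} then applies without modification: a 2-site $(K,J)$ admits a bicategory of fractions for $W_J$. Taking $K$ to be each of the three slice 2-categories in turn and $J = J_X$, we conclude that each admits a bicategory of fractions for the associated class $W_{J_X}$ of weak equivalences, where these weak equivalences are, by Definition~\ref{def:weak_equiv}, the ff 1-arrows of the slice that are $J_X$-locally split.

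The only point worth a sentence of commentary is that everything here is to be interpreted internally to the slice: ff-ness, $J_X$-local splitness, and hence membership in $W_{J_X}$, all refer to the hom-categories and 1-arrows of $\Cat(S)/_lX$ (respectively the pseudo-slices), not to the underlying 2-category $\Cat(S)$. Since BF1--BF4 are precisely the conditions packaged into ``admits a bicategory of fractions'', and the proposition has just verified the three clauses of Definition~\ref{def:strict_pretopology} for $J_X$, nothing further is needed; in particular the existence statement does not invoke Proposition~\ref{prop:loc_ess_small}.

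There is essentially no obstacle to overcome, the substantive work having been done in establishing that $J_X$ is a fully faithful singleton coverage. The one thing I would check before finishing is that the proposition's verification of the coverage axioms really does cover all three variants uniformly --- the strict pullback construction and the ff-ness argument are written for the lax slice, and the ``\emph{mutatis mutandis}'' clause is meant to carry them to $\Cat(S)/X$ and $\Gpd(S)/X$. For the pseudo-slices one restricts to 1-arrows $(f,a)$ with $a$ invertible, and the filler $(\tilde f, c)$ produced in the proof has $c$ built from $a^{-1}$ and $b$, hence invertible when $a,b$ are; for $\Gpd(S)/X$ one additionally observes that $\Gpd(S)$ is closed in $\Cat(S)$ under the finite limits used, so the same squares serve. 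With that confirmed, the corollary follows.
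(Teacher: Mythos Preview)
Your proposal is correct and is exactly the paper's intended argument: the corollary is stated without proof because it is immediate from the preceding proposition (that the three slice 2-categories are 2-sites for $J_X$) together with Theorem~\ref{thm:2-sites_have_fractions}. Your extra paragraph checking that the \emph{mutatis mutandis} clause really carries the proposition's argument to the pseudo-slice and groupoid-slice variants is a reasonable sanity check, but the paper regards this as already handled by the proposition itself.
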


If we assume that $J$ satisfies the condition WISC from \cite{Roberts_12} (namely all slices $J/x$ have a weakly initial set), then $W_{J_X}$ has a locally small cofinal class; the localisations above are then locally essentially small.

These 2-categories are not examples of 2-categories of internal categories or groupoids in some 1-category, so are not covered by the results of \cite{Roberts_12}.

\begin{remark}
	Given a 2-site $(K,J)$, if every arrow $j\colon u\to x\in J$ is such that $j^*\colon K(x,z) \to K(u,z)$ is fully faithful, then one can construct a simpler model for the localisation $K[W_J^{-1}]$, where 2-arrows are no longer equivalence classes of diagrams, but given by individual diagrams.
	This condition holds for 2-sites of the form $(\Cat(S),J(T))$ where $T$ is a subcanonical singleton pretopology (as well as various sub-2-categories) \cite{Roberts_12}.
	This approach will be taken up in future work.
\end{remark}

Finally, note that nothing in the above relies on $K$ being a (2,1)-category, namely one with only invertible 2-arrows. 
This is usually assumed for results subsumed by theorem \ref{thm:2-sites_have_fractions}, but is unnecessary in the framework presented here.

% \bibliographystyle{srtnumbered}
% \bibliography{../Refs}

\begin{thebibliography}{10}
\newcommand{\enquote}[1]{`#1'}

\bibitem{Abbad-Vitale_13}
O.~Abbad and E.M. Vitale.
\newblock \enquote{Faithful calculus of fractions}.
\newblock {\em Cahiers de Topologie et G{\'e}om{\'e}trie Diff{\'e}rentielle
  Cat{\'e}goriques\/} {\bf 54} (2013), 221--239.

\bibitem{Bartels}
T.~Bartels.
\newblock {\em {Higher gauge theory I: 2-Bundles}\/}.
\newblock Ph.D. thesis, University of California Riverside.
\newblock {arXiv:math.CT/0410328}.
\newblock 2006.

\bibitem{Bunge-Pare_79}
M.~Bunge and R.~Par\'e.
\newblock \enquote{Stacks and equivalence of indexed categories}.
\newblock {\em Cahiers Topologie G\'eom. Diff\'erentielle\/} {\bf 20}~(4)
  (1979), 373--399.

\bibitem{Karagila_14}
A.~Karagila.
\newblock \enquote{Embedding orders into cardinals with {$DC_\kappa$}}.
\newblock {\em Fundamenta Mathematicae\/} {\bf 226} (2014), 143--156.
\newblock {arXiv:1212.4396}.

\bibitem{Leinster_98}
T.~Leinster.
\newblock \enquote{Basic bicategories}.
\newblock arXiv:math.CT/9810017, 1998.

\bibitem{Makkai}
M.~Makkai.
\newblock \enquote{Avoiding the axiom of choice in general category theory}.
\newblock {\em J. Pure Appl. Algebra\/} {\bf 108} (1996), 109--173.

\bibitem{Pronk_96}
D.~Pronk.
\newblock \enquote{Etendues and stacks as bicategories of fractions}.
\newblock {\em Compositio Math.\/} {\bf 102}~(3) (1996), 243--303.

\bibitem{Pronk-Warren_14}
D.~Pronk and M.~Warren.
\newblock \enquote{Bicategorical fibration structures and stacks}.
\newblock {\em Theory and Applications of Categories\/} {\bf 29}~(29) (2014),
  836--873.

\bibitem{Roberts_12}
D.~M. Roberts.
\newblock \enquote{Internal categories, anafunctors and localisation}.
\newblock {\em Theory Appl. Categ.\/} {\bf 26}~(29) (2012), 788--829.
\newblock {arXiv:1101.2363}.

\bibitem{Roberts_15}
D.~M. Roberts.
\newblock \enquote{The weak choice principle {WISC} may fail in the category of
  sets}.
\newblock {\em Studia Logica\/}  (2015),
  doi:10.1007/s11225--015--9603--6{arXiv}:1311.3074.

\bibitem{Shulman_12}
M.~Shulman.
\newblock \enquote{Exact completions and small sheaves}.
\newblock {\em Theory Appl. Categ.\/} {\bf 27} (2012), 97--173.

\bibitem{Street_82}
R.~Street.
\newblock \enquote{Two-dimensional sheaf theory}.
\newblock {\em J. Pure Appl. Algebra\/} {\bf 23}~(3) (1982), 251--270.

\bibitem{stacks-project}
{The Stacks Project Authors}.
\newblock \enquote{{Stacks Project}}.
\newblock \url{http://stacks.math.columbia.edu}, 2015.

\bibitem{Tommasini_I}
M.\ Tommasini.
\newblock \enquote{Some insights on bicategories of fractions - {I}},
  arXiv:1410.3990, 2014.

\bibitem{vdBerg-Moerdijk_14}
B.~van~den Berg and I.~Moerdijk.
\newblock \enquote{The axiom of multiple choice and models for constructive set
  theory}.
\newblock {\em Journal of Mathematical Logic\/} {\bf 14}~(1).
\newblock {arXiv}:1204.4045.

\bibitem{Vistoli_05}
A.~Vistoli.
\newblock \enquote{Grothendieck topologies, fibred categories and descent
  theory}.
\newblock In {\em Fundamental algebraic geometry\/}, {\em Math. Surveys.
  Monogr.\/}, Volume 123 (Amer. Math. Soc., Providence, RI, 2005), 1--104.
\newblock {arXiv:math/0412512}. Available from
  \url{http://www.homepage.sns.it/vistoli/descent.pdf}.

\end{thebibliography}

\end{document}